\newcommand{\USYMQR}{\textsc{Usymqr}\xspace}
\newcommand{\USYMLQ}{\textsc{Usymlq}\xspace}
\newcommand{\USYMLQR}{\textsc{Usymlqr}\xspace}
\newcommand{\BiLQ}{\textsc{BiLQ}\xspace}
\newcommand{\BiCG}{\textsc{BiCG}\xspace}
\newcommand{\QMR}{\textsc{Qmr}\xspace}
\newcommand{\CGM}{\textsc{Cg}\xspace}
\newcommand{\FOM}{\textsc{Fom}\xspace}
\newcommand{\GMRES}{\textsc{Gmres}\xspace}
\newcommand{\DQGMRES}{\textsc{Dqgmres}\xspace}
\newcommand{\BLCG}{\textsc{Block-Cg}\xspace}
\newcommand{\BLMINRES}{\textsc{Block-Minres}\xspace}
\newcommand{\BLGMRES}{\textsc{Block-Gmres}\xspace}
\newcommand{\SPMR}{\textsc{Spmr}\xspace}
\newcommand{\TriCG}{\textsc{TriCG}\xspace}
\newcommand{\TriMR}{\textsc{TriMR}\xspace}
\newcommand{\GPCG}{\textsc{Gpcg}\xspace}
\newcommand{\GPMR}{\textsc{Gpmr}\xspace}
\newcommand{\GMRESk}{\textsc{Gmres}($k$)\xspace}
\def\T{^T\!}
\newlength{\forwidth}
\newcommand{\bmat}[1]{\begin{bmatrix} #1 \end{bmatrix}}
\pgfplotsset{compat=newest}
\newcommand*{\includetikzgraphics}[2][]{%
  \includegraphics[#1]{#2}
}
\renewcommand{\todo}[2][]{\tikzexternaldisable\@todo[#1]{#2}\tikzexternalenable}
\newcommand{\barbar}[1]{\bar{\bar{#1}}}
\newcommand{\x}{\boldsymbol{\times}}
\newcommand{\applyGivens}{\mathop{\text{ref}}}
\newcommand{\computeGivens}{\mathop{\text{givens}}}
\newlength{\outlength}
\newcommand{\ain}{a^{\text{\makebox[\outlength][l]{in}}}}
\newcommand{\ainshort}{a^{\text{in}}}
\newcommand{\aout}{a^{\text{\makebox[\outlength][l]{out}}}}
\title{%
  {\GPMR}:\@ An Iterative Method for Unsymmetric Partitioned Linear Systems
}
\author{%
  Alexis Montoison%
  \thanks{%
    GERAD and Department of Mathematics and Industrial Engineering,
    Polytechnique Montr\'eal, QC, Canada.
    E-mail: \mailto{alexis.montoison@polymtl.ca}.
    Research supported by a FRQNT grant and an excellence scholarship of the IVADO institute.
  }
  \and
  Dominique Orban%
  \thanks{%
    GERAD and Department of Mathematics and Industrial Engineering,
    Polytechnique Montr\'eal, QC, Canada.
    E-mail: \mailto{dominique.orban@gerad.ca}.
    Research partially supported by an NSERC Discovery Grant.
  }
}
\date{\today}
\begin{document}

  \nolinenumbers
  \maketitle

  \thispagestyle{firstpage}
  \pagestyle{myheadings}

  \begin{abstract}
    We introduce an iterative method named \GPMR for solving \(2\)\(\times\)\(2\) block unsymmetric linear systems.
    \GPMR is based on a new process that reduces simultaneously two rectangular matrices to upper Hessenberg form and that is closely related to the block-Arnoldi process.
    \GPMR is tantamount to \BLGMRES with two right-hand sides in which the two approximate solutions are summed at each iteration, but requires less storage and work per iteration.
    We compare the performance of \GPMR with \GMRES and \BLGMRES on linear systems from the SuiteSparse Matrix Collection.
    In our experiments, \GPMR terminates significantly earlier than \GMRES on a residual-based stopping condition with an improvement ranging from around 10\% up to 50\% in terms of number of iterations.
    We also illustrate by experiment that \GPMR appears more resilient to loss of orthogonality than \BLGMRES.
  \end{abstract}

  \begin{keywords}
    sparse linear systems, iterative methods, orthogonal Hessenberg reduction, block-Arnoldi process, Krylov subspaces, generalized saddle-point systems, unsymmetric partitioned matrices, regularization, preconditioners
  \end{keywords}

  \begin{AMS}
    15A06,  
    65F10,  
    65F08,  
    65F22,  
    65F25,  
    65F35,  
    65F50,  

  \end{AMS}

  \section{Introduction}

  Consider the partitioned linear system
  \begin{equation}
    \label{eq:gsp}
    \bmat{
      M         & A_{\star} \\
      B_{\star} & N
    }
    \bmat{
      x_{\star} \\ y_{\star}
    }
    =
    \bmat{
      b_{\star} \\ c_{\star}
    },
  \end{equation}
  where $M \in \R^{m \times m}$, $N \in \R^{n \times n}$, $A_{\star} \in \R^{m \times n}$ and $B_{\star} \in \R^{n \times m}$.
  We assume that $A_{\star}$ and $B_{\star}$ are nonzero, and that $b_{\star} \in \R^m$ and $c_{\star} \in \R^n$ are both nonzero.
  System~\eqref{eq:gsp} occurs, among others, in the discretization of systems of partial-differential equations, including the Navier-Stokes equations by way of the finite elements method \citep{elman-2002}.
  A prime example is domain decomposition with no overlap, also known as iterative substructuring \citep{dolean-jolivet-nataf-2015}, that consists in splitting a domain into $k$ non-overlapping subregions, and that leads to structured matrices with \emph{arrowhead} form \citep{ferris-horn-1998}.
  Let $\mathcal{I}$ be the set of all indices of the discretization points that belong to the interior of the subdomains and $\Gamma$ the set of those corresponding to the interfaces between the subdomains.
  Grouping the unknowns corresponding to $\mathcal{I}$ by subdomain in $u_{\mathcal{I}}$ and those corresponding to $\Gamma$ in $u_{\Gamma}$, we obtain the arrowhead partitioning of the stiffness system
  \begin{equation}
    \label{eq:ism}
    \bmat{A_{\mathcal{I}\mathcal{I}} & A_{\mathcal{I} \Gamma} \\ A_{\Gamma \mathcal{I}} & A_{\Gamma\Gamma}}
    \bmat{u_{\mathcal{I}} \\ u_\Gamma}
    =
    \bmat{f_{\mathcal{I}} \\ f_\Gamma}
    \quad \Longleftrightarrow \quad
    \bmat{A_{11}       &        &              & A_{1 \Gamma} \\
                       & \ddots &              & \vdots       \\
                       &        & A_{kk}       & A_{k \Gamma} \\
          A_{\Gamma 1} & \hdots & A_{\Gamma k} & A_{\Gamma \Gamma}}
    \bmat{u_1 \\ \vdots \\ u_{k} \\ u_\Gamma}
    =
    \bmat{f_1 \\ \vdots \\ f_{k} \\ f_\Gamma},
  \end{equation}
  where \(u = (u_{\mathcal{I}}, u_\Gamma)\) is the vector of nodal displacements and \(f\) the vector of nodal forces.
  For a tour of applications leading to~\eqref{eq:gsp}, we refer the reader to \citep{benzi-golub-liesen-2005}.
  We assume that there exist nonsingular $P_{\ell}$ and $P_r$ with inexpensive inverses such that
  \begin{equation}
    \label{eq:K}
    K :=
    P_{\ell}^{-1}
    \bmat{
      M         & A_{\star} \\
      B_{\star} & N
    }
    P_r^{-1}
    =
    \bmat{
      \lambda I & A     \\
      B         & \mu I
    }
    ,\quad \lambda,\mu \in \R,
  \end{equation}
  so that the equivalent preconditioned system
  \begin{equation}
    \label{eq:gsp_preconditioned}
    \bmat{
      \lambda I & A     \\
      B         & \mu I
    }
    \bmat{
    x \\ y
    }
    =
    \bmat{
    b \\ c
    },
    \quad
    \bmat{x_{\star} \\ y_{\star}} = P_{r}^{-1} \bmat{x \\ y},
    \quad
    \bmat{b \\ c} = P_{\ell}^{-1} \bmat{b_{\star} \\ c_{\star}}
  \end{equation}
  can be solved instead of~\eqref{eq:gsp}.
  Note that \(\lambda\) and/or \(\mu\) may vanish.
  For example, the ideal preconditioners of \citet{murphy-golub-wathen-2000} and \citet{ipsen-2001} lead to~\eqref{eq:K}.
  Although ideal preconditioners are typically impractical because they require the solution of systems with the Schur complement $S = N - B_{\star}M^{-1\!}A_{\star}$, viable preconditioners such that $P_{\ell} P_r = \blkdiag(M, N)$ can be employed when $M$ and $N$ are both nonsingular.

  Given an unstructured matrix $C$, a practical approach to recovering the matrix of~\eqref{eq:gsp} is to
  permute its rows and columns with orderings determined by graph partitioning tools such as METIS \citep{karypis-kumar-1997}.
  This reordering also provides a uniform partitioning to compute a parallel block-Jacobi preconditioner for~\eqref{eq:K}.

  When \(\lambda \neq 0\),~\eqref{eq:gsp_preconditioned} can be reduced to the Schur complement system
  \[
    (\mu I - \lambda^{-1} BA) y = c - \lambda^{-1} Bb, \quad x = \lambda^{-1} (b - Ay).
  \]
  Such eliminated system is attractive because of its smaller size, but may have worse conditioning than~\eqref{eq:gsp_preconditioned}, e.g., when \(B = A^T\), \(M = M^T \succ 0\) and \(N = N^T \preceq 0\), though not always, e.g, when~\eqref{eq:gsp} is symmetric and positive definite.
  In this paper, we focus on applying an iterative method to~\eqref{eq:gsp_preconditioned} directly while exploiting its block structure.

  \subsection*{Contributions}

  Our main contributions are (i) a new orthogonal Hessenberg reduction process, (ii) an iterative method based on said process named \GPMR (General Partitioned Minimal Residual) specialized for~\eqref{eq:gsp_preconditioned}, and (iii) an efficient software implementation to solve~\eqref{eq:gsp_preconditioned} in arbitrary floating-point arithmetic on CPU and GPU.

  \subsection*{Related research}

  Numerous Krylov methods have been developed for solving general unsymmetric linear systems, including \BiLQ \citep{montoison-orban-2020}, \GMRES \citep{saad-schultz-1986}, or \QMR \citep{freund-nachtigal-1991}.
  Few are tailored specifically to the block structure of~\eqref{eq:gsp}.

  Specialized iterative methods have been developed for special cases of~\eqref{eq:gsp}.
  \citet{estrin-greif-2018} developed \SPMR; a family of methods for~\eqref{eq:gsp} that exploit its block structure when $N = 0$ and $b$ or $c$ is zero.
   \citet{buttari-orban-ruiz-titley_peloquin-2019} developed \USYMLQR, an interlacing of the methods \USYMLQ and \USYMQR of \citet{saunders-simon-yip-1988}, applicable when \(A = B^T\), \(M = M^T \succ 0\) and \(N = 0\).
  \citet{greif-wathen-2019} formulate conditions under which \CGM may be used in the case where \(M \succeq 0\) is maximally rank deficient and \(N = N^T \preceq 0\).
   When \(N = N^T \prec 0\) also holds, \citet{orban-arioli-2017} propose a family of methods inspired from regularized least norm and least squares that apply after a translation so that either \(b\) or \(c\) is zero, and \citet{montoison-orban-2021} develop \TriCG and \TriMR, two methods related to \BLCG and \BLMINRES.
   When \(A = B^T\), and \(M\) and \(N\) are either zero or symmetric definite matrices, our orthogonal Hessenberg reduction process coincides with that of \citet{saunders-simon-yip-1988} and \GPMR coincides with \TriMR in exact arithmetic.

  \subsection*{Notation}

  All vectors are columns vectors.
  Vectors and matrices are denoted by lowercase Latin and capital Latin letters, respectively.
  The only exceptions are \(2\)\(\times\)\(2\) blocks, which are represented by capital Greek letters, and the matrices denoted $w_k$ below.
  For a vector $v$, $\|v\|$ denotes the Euclidean norm of $v$, and for a matrix $M$, $\|M\|_F$ denotes the Frobenius norm of $M$.
  The shorthand $y \mapsto M \backslash y$ represents an operator that returns the solution of $Mx = y$.
  $e_i$ is the $i$-th column of an identity matrix of size dictated by the context.
  \(I_k\) represents the \(k\)\(\times\)\(k\) identity operator.
  We omit the subscript \(k\) when it is clear from the context.
  We let
  \begin{equation}
    \label{eq:def-K-K0-D}
    K_0 :=
    \begin{bmatrix}
      0 & A \\
      B & 0
    \end{bmatrix},
    \quad
    \blkdiag(\lambda I, \mu I) =
    \begin{bmatrix}
      \lambda I & 0     \\
      0         & \mu I
    \end{bmatrix},
    \quad
    d :=
    \bmat{b \\ c},
    \quad
    D :=
    \bmat{b & 0 \\ 0 & c}.
  \end{equation}
  For a matrix $C$ and a vector $t$, $\mathcal{K}_k(C, t)$ is the Krylov subspace $\Span\left\{t, Ct, \ldots, C^{k-1}t\right\}$.
  For a matrix $T$ with as many rows as $C$ has columns, $\mathcal{K}_k(C, T)$ is the block-Krylov subspace $\Span\left\{T, CT, \ldots, C^{k-1}T\right\}$.
  We abusively write $(b, c)$ and $l = (l_1, \ldots, l_n)$ to represent the column vectors $\bmat{b\T & c\T\,}^T$ and $l = \bmat{l_1 & \cdots & l_n}\T$, respectively.

  \section{A Hessenberg reduction process}

  In this section, we state a new Hessenberg reduction process for general $A$ and $B$, its relationship with the block-Arnoldi process, and the modifications necessary for regularization.

  \begin{theorem}
    \label{theorem:mo}
    Let $A \in \R^{m \times n}$, $B \in \R^{n \times m}$, and $p := \min \{m,n\}$.
    There exist $V \in \R^{m \times p}$ and $U \in \R^{n \times p}$ with othonormal columns, and upper Hessenberg $H \in \R^{p \times p}$ and $F \in \R^{p \times p}$ with nonnegative subdiagonal coefficients such that
    \begin{subequations}
      \begin{align}
        \label{eq:VAU}
        V\T A U & = H,\\
        \label{eq:UBV}
        U\T B V & = F.
      \end{align}
    \end{subequations}
  \end{theorem}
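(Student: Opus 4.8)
The plan is to construct $V$, $U$, $H$, and $F$ column by column via a coupled recurrence, mimicking the Arnoldi/Golub--Kahan bidiagonalization but with the two rectangular matrices $A$ and $B$ playing dual roles. Concretely, I would choose arbitrary unit-norm starting vectors $v_1 \in \R^m$ and $u_1 \in \R^n$ (their specific choice is irrelevant for the existence statement), and then, for $k \geq 1$, set
\begin{align*}
  q_k &:= A u_k - \sum_{i=1}^{k} (v_i\T A u_k)\, v_i, & h_{k+1,k} &:= \|q_k\|, & v_{k+1} &:= q_k / h_{k+1,k},\\
  p_k &:= B v_k - \sum_{i=1}^{k} (u_i\T B v_k)\, u_i, & f_{k+1,k} &:= \|p_k\|, & u_{k+1} &:= p_k / f_{k+1,k},
\end{align*}
with $h_{i,k} := v_i\T A u_k$ and $f_{i,k} := u_i\T B v_k$ for $i \le k$. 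By construction $h_{i,k} = 0$ and $f_{i,k} = 0$ whenever $i > k+1$, so the matrices $H = (h_{i,k})$ and $F = (f_{i,k})$ are upper Hessenberg with nonnegative subdiagonals, and the columns of $V$ and $U$ are orthonormal by the Gram--Schmidt construction (a routine induction). The identities~\eqref{eq:VAU} and~\eqref{eq:UBV} then follow by reading off $v_i\T A u_k$ and $u_i\T B v_k$ from the defining relations.

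The subtlety — and the real content beyond bookkeeping — is that the two recurrences must run for exactly $p = \min\{m,n\}$ steps and terminate cleanly, i.e. we need to produce $p$ orthonormal $v$'s in $\R^m$ and $p$ orthonormal $u$'s in $\R^n$ simultaneously. I would track the pair of subspaces $\mathcal{V}_k := \Span\{v_1,\dots,v_k\}$ and $\mathcal{U}_k := \Span\{u_1,\dots,u_k\}$ and argue by induction that at step $k \le p$ the vectors $Au_k$ and $Bv_k$ are well-defined elements of $\R^m$ and $\R^n$ respectively, that the Gram--Schmidt residuals $q_k, p_k$ live in $\R^m$ and $\R^n$, and that the process can always be continued to step $p$. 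Because $V$ has at most $m$ orthonormal columns and $U$ at most $n$, the honest target is $p = \min\{m,n\}$ columns each; if $m \neq n$, the shorter of $V,U$ caps the iteration.

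The main obstacle is handling a \emph{breakdown}, i.e. the event $q_k = 0$ or $p_k = 0$ before step $p$, since then $h_{k+1,k}$ or $f_{k+1,k}$ vanishes and $v_{k+1}$ or $u_{k+1}$ is undefined by the formula above. This is analogous to a lucky/serious breakdown in the Arnoldi or nonsymmetric Lanczos setting, and the standard fix applies: when $q_k = 0$, the subspace $\mathcal{V}_k$ is $A$-invariant relative to $\mathcal{U}_k$ in the appropriate sense, and one simply sets $h_{k+1,k} = 0$ and chooses $v_{k+1}$ to be any unit vector orthogonal to $v_1,\dots,v_k$ (such a vector exists because $k < p \le m$); symmetrically for $p_k = 0$. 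This keeps $H$ and $F$ upper Hessenberg with nonnegative subdiagonals and the columns orthonormal, so the conclusion of the theorem is unaffected — the existence claim is insensitive to whether subdiagonal entries are strictly positive. I would state this breakdown handling explicitly, since it is the one place where the naive ``just normalize'' argument fails, and then close with the observation that $V\T A U$ and $U\T B V$ are $p \times p$ because $V \in \R^{m\times p}$, $U \in \R^{n\times p}$.
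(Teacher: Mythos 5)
Your construction is exactly the one the paper uses: the coupled Gram--Schmidt recurrences orthogonalizing $Au_k$ against $v_1,\dots,v_k$ and $Bv_k$ against $u_1,\dots,u_k$, the same breakdown fix (pick an arbitrary unit vector orthogonal to the previous ones and set the subdiagonal entry to zero, which is possible since $k<p\le\min\{m,n\}$), and the same final step of reading $V\T AU=H$ and $U\T BV=F$ off the recurrences once orthonormality is established by induction. The only part you leave implicit --- the ``routine induction'' for orthonormality and the verification that $v_i\T Au_j=0$ for $i>j+1$ --- is precisely what the paper writes out, so the proposal is correct and matches the paper's proof.
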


  \begin{proof}
    Choose arbitrary unit $u_1 \in \R^n$ and $v_1 \in \R^m$.
    For $k = 1, \ldots, p-1$, define
    \begin{subequations}
      \label{eq:def_uv}
      \begin{align}
        \beta_{k+1}  v_{k+1} & = A u_k - \textstyle\sum_{i=1}^k (v_i\T A u_k) v_i, \\
        \gamma_{k+1} u_{k+1} & = B v_k - \textstyle\sum_{i=1}^k (u_i\T B v_k) u_i,
      \end{align}
    \end{subequations}
    with positive $\beta_{k+1}$ and $\gamma_{k+1}$ such that $v_{k+1}$ and $u_{k+1}$ are unit vectors.
    In case of breakdown, which happens if $Au_k \in \Span\{v_1,~\ldots,~v_k\}$ or $Bv_k \in \Span\{u_1,~\ldots,~u_k\}$, we choose an arbitrary unit $v_{k+1} \perp \Span\{v_1,~\ldots,~v_k\}$ or $u_{k+1} \perp \Span\{u_1,~\ldots,~u_k\}$ and set $\beta_{k+1} = 0$ or $\gamma_{k+1} = 0$, respectively.
    We prove by induction that the following statement, denoted $\mathcal{P}(k)$, is verified:
    \begin{equation}
      \label{eq:hessenberg_reduction}
        v_j\T v_{k+1} = 0
        \quad \text{and} \quad
        u_j\T u_{k+1} = 0 \quad
        (j = 1, \ldots, k).
    \end{equation}
    In view of the above, \(v_j\T v_{k+1} = 0\) clearly holds if \(\beta_{k+1} = 0\), while \(u_j\T u_{k+1} = 0\) holds if \(\gamma_{k+1} = 0\).
    Thus we focus on the case where~\eqref{eq:def_uv} applies.
    Because $v_1$ and $u_1$ are unit vectors,
    \begin{alignat*}{3}
      \beta_2  v_1\T v_2 & = v_1\T A u_1 - (v_1\T A u_1) v_1\T v_1 && = (1 - \|v_1\|^2)(v_1\T A u_1) && = 0, \\
      \gamma_2 u_1\T u_2 & = u_1\T B v_1 - (u_1\T B v_1) u_1\T u_1 && = (1 - \|u_1\|^2)(u_1\T B v_1) && = 0,
    \end{alignat*}
    so that the base case $\mathcal{P}(1)$ holds.
    Let $\mathcal{P}(1), \ldots, \mathcal{P}(k-1)$ hold.
    For $j = 1, \ldots, k$,~\eqref{eq:def_uv} implies
    \begin{alignat*}{3}
      \beta_{k+1}  v_j\T v_{k+1} & = v_j\T A u_{k} - \textstyle\sum_{i=1}^{k} (v_i\T A u_{k}) v_j\T v_i && = v_j\T A u_{k} - (v_j\T A u_{k}) v_j\T v_j && = 0, \\
      \gamma_{k+1} u_j\T u_{k+1} & = u_j\T B v_{k} - \textstyle\sum_{i=1}^{k} (u_i\T B v_{k}) u_j\T u_i && = u_j\T B v_{k} - (u_j\T B v_{k}) u_j\T u_j && = 0,
    \end{alignat*}
    so that $\mathcal{P}(k)$ also holds.
    For $j = 1, \ldots, k-1$, we have from~\eqref{eq:def_uv} and $\mathcal{P}(k)$ that
    \begin{alignat*}{2}
      v_{k+1}\T A u_j & = v_{k+1}\T \left(\beta_{j+1}  v_{j+1} + \textstyle\sum_{i=1}^j (v_i\T A u_j)v_i\right) && = 0, \\
      u_{k+1}\T B v_j & = u_{k+1}\T \left(\gamma_{j+1} u_{j+1} + \textstyle\sum_{i=1}^j (u_i\T B v_j)u_i\right) && = 0,
    \end{alignat*}
    because $k+1 > j+1$.
    Thus, $V := \bmat{v_1 & \ldots & v_p}$, $U := \bmat{u_1 & \ldots & u_p}$,
    \begin{equation*}
    H=
    \bmat{
      v_1\T A u_1 & v_1\T A u_2 & \hdots    & v_1\T A u_p     \\
      \beta_2     & \ddots      & \ddots    & \vdots          \\
                  & \ddots      & \ddots    & v_{p-1}\T A u_p \\
                  &             & \beta_{p} & v_p\T A u_p
    }
    \text{ and }
    F=
    \bmat{
      u_1\T B v_1 & u_1\T B v_2 & \hdots     & u_1\T B v_p     \\
      \gamma_2    & \ddots      & \ddots     & \vdots          \\
                  & \ddots      & \ddots     & u_{p-1}\T B v_p \\
                  &             & \gamma_{p} & u_p\T B v_p
    }
    \end{equation*}
    satisfy~\eqref{eq:VAU}--\eqref{eq:UBV} and have the properties announced.
  \end{proof}
  \Cref{alg:mo} formalizes a Hessenberg reduction process derived from \cref{theorem:mo}.

  \begin{algorithm}[ht]
    \caption{%
      Orthogonal Hessenberg reduction
    }
    \label{alg:mo}
    \begin{algorithmic}[1]
      \Require $A$, $B$, $b$, $c$, all nonzero
      \State\label{alg:mo:init}%
      $\beta v_1 = b$, $\gamma u_1 = c$  \Comment{$(\beta, \, \gamma) > 0$ so that $\|v_1\| = \|u_1\| = 1$}
      \For{$k$ = 1,~2,~\(\ldots\)}
        \For{$i$ = 1,~\(\ldots\),~$k$}
          \State $h_{i,k} = v_i\T A u_k$
          \State $f_{i,k} = u_i\T B v_k$
        \EndFor
        \State $h_{k+1,k} v_{k+1} = A u_k - \sum_{i=1}^k h_{i,k} v_i$ \Comment{$h_{k+1,k} > 0$ so that $\|v_{k+1}\| = 1$}
        \State $f_{k+1,k} u_{k+1} = B v_k - \sum_{i=1}^k f_{i,k} u_i$ \Comment{$f_{k+1,k} > 0$ so that $\|u_{k+1}\| = 1$}
      \EndFor
    \end{algorithmic}
  \end{algorithm}

  Define \(V_k := \begin{bmatrix} v_1 & \ldots & v_k \end{bmatrix}\) and \(U_k := \begin{bmatrix} u_1 & \ldots & u_k \end{bmatrix}\).
  After \(k\) iterations of \Cref{alg:mo}, the situation may be summarized as
  \begin{subequations}
    \label{eq:mo}
    \begin{alignat}{2}
         A U_k & = V_k H_k + h_{k+1,k} v_{k+1} e_k^T && = V_{k+1} H_{k+1,k}
         \label{eq:mo-V}
      \\ B V_k & = U_k F_k + f_{k+1,k} u_{k+1} e_k^T && = U_{k+1} F_{k+1,k}
        \label{eq:mo-U}\\
        V_k^T V_k & = U_k^T U_k = I_k,
        \label{eq:mo-UV}
    \end{alignat}
  \end{subequations}
  where
  \begin{equation*}
    H_k =
    \bmat{
      h_{1,1}~ & h_{1,2}~ & \hdots    & h_{1,k}   \\
      h_{2,1}~ & \ddots~  & \ddots    & \vdots    \\
               & \ddots~  & \ddots    & h_{k-1,k} \\
               &          & h_{k,k-1} & h_{k,k}
    }, \qquad
    F_k =
    \bmat{
      f_{1,1}~ & f_{1,2}~ & \hdots    & f_{1,k}   \\
      f_{2,1}~ & \ddots~  & \ddots    & \vdots    \\
               & \ddots~  & \ddots    & f_{k-1,k} \\
               &          & f_{k,k-1} & f_{k,k}
    },
  \end{equation*}
  and
  \begin{equation*}
    H_{k+1,k} =
    \bmat{
      H_{k} \\
      h_{k+1,k} e_{k}^T
    }, \qquad
    F_{k+1,k} =
    \bmat{
      F_{k} \\
      f_{k+1,k} e_{k}^T
    }.
  \end{equation*}
  If $B = A\T$, \Cref{alg:mo} reduces to the orthogonal tridiagonalization process of \citet{saunders-simon-yip-1988}, $H_k$ and $F_k$ are tridiagonal and $H_k = F_k^T$.
  \Cref{alg:mo} uses the Gram-Schmidt method for computing $\ell_2$-orthonormal bases $V_k$ and $U_k$ for simplicity.
  In a practical implementation, the modified Gram-Schmidt algorithm would be used instead.
  While~\eqref{eq:mo-V}--\eqref{eq:mo-U} hold to within machine precision despite loss of orthogonality,~\eqref{eq:mo-UV} holds only in exact arithmetic.
  In exact arithmetic,~\eqref{eq:mo} yields
  \[
    V_k\T A U_k = H_k \quad \text{and} \quad U_k\T B V_k = F_k,
  \]
  which imply that the singular values of \(H_k\) and \(F_k\) are estimates of those of \(A\) and \(B\), respectively.
  That is in contrast with the process of \citet{arnoldi-1951}, which can be used to approximate eigenvalues.

  \subsection{Relation with the block-Arnoldi process}%
  \label{sec:relation-arnoldi}

  For \(k \geq 1\),
  \begin{subequations}
    \label{eq:subspaces}
    \begin{align}
      v_{2k}   & \in \Span\{b, \ldots, (AB)^{k-1}b              , Ac, \ldots, (AB)^{k-1}Ac\}, \\
      v_{2k+1} & \in \Span\{b, \ldots, (AB)^{k}b \phantom{^{-1}}, Ac, \ldots, (AB)^{k-1}Ac\}, \\
      u_{2k}   & \in \Span\{c, \ldots, (BA)^{k-1}c              , Bb, \ldots, (BA)^{k-1}Bb\}, \\
      u_{2k+1} & \in \Span\{c, \ldots, (BA)^{k}c \phantom{^{-1}}, Bb, \ldots, (BA)^{k-1}Bb\}.
    \end{align}
  \end{subequations}
  The subspaces generated by \Cref{alg:mo} can be viewed as the union of two block-Krylov subspaces generated by $AB$ and $BA$ with respective starting blocks $\bmat{b & Ac}$ and $\bmat{c & Bb}$.
  Note the similarity between~\eqref{eq:mo-ba} and a Krylov process in which basis vectors have been permuted.
  Let
  $$
    P_k := \bmat{e_1 & e_{k+1} & \cdots & e_i & e_{k+i} & \cdots & e_k & e_{2k}} = \bmat{E_1 & \cdots & E_k},
    \mathhfill
    E_k := \bmat{e_k & \\ & e_k}
  $$
  denote the permutation introduced by \citet{paige-1974} that restores the order in which \Cref{alg:mo} generates basis vectors, i.e.,
  \begin{equation}
    \label{eq:form-Wk}
    W_k := \bmat{V_k & 0 \\ 0 & U_k} P_k
    =
    \bmat{
      w_1 & \cdots & w_k
    },
    \qquad
    w_k =
    \bmat{
      v_k & 0 \\ 0 & u_k
    } :=
    \bmat{ v_k^\circ & u_k^\circ},
  \end{equation}
  where we defined \(v_k^\circ := (v_k, 0)\) and \(u_k^\circ := (0, u_k)\), and we abusively write
  \(
    \arraycolsep=1.4pt
    \bmat{w_1 & \cdots & w_k}
  \)
  instead of
  \(
    \arraycolsep=1.4pt
    \bmat{v_1^\circ & u_1^\circ & \cdots & v_k^\circ & u_k^\circ}
  \).
  The projection of $K_0$ into the block-Krylov subspace $\Span\{w_1,\ldots, w_k\} := \Span\{v_1^\circ, u_1^\circ, \ldots, v_k^\circ, u_k^\circ\}$ is also shuffled to block-Hessenberg form with blocks of size \(2\).
  Indeed, if we multiply~\eqref{eq:mo-ba} on the right with \(P_k\) and use~\eqref{eq:form-Wk}, we obtain
  \begin{equation}
  \label{eq:ba-A-B}
    K_0 W_k =
    \bmat{V_{k+1} & 0 \\ 0 & U_{k+1}} P_{k+1} P_{k+1}^T
    \bmat{0 & H_{k+1,k} \\ F_{k+1,k} & 0} P_k
     =
    W_{k+1} G_{k+1,k},
  \end{equation}
  where
  $$G_{k+1,k}
  =
      \begin{bmatrix}
        \Psi_{1,1} & \Psi_{1,2} & \hdots & \Psi_{1,k}    \\
        \Psi_{2,1} & \Psi_{2,2} & \ddots & \vdots       \\
                   & \ddots     & \ddots & \Psi_{k-1,k} \\
                   &            & \ddots & \Psi_{k,k}   \\
                   &            &        & \Psi_{k+1,k}
      \end{bmatrix},
  \qquad
  \Psi_{i,j} = \begin{bmatrix} 0 & h_{i,j} \\ f_{i,j} & 0 \end{bmatrix}.$$
  The two relations at line~\ref{alg:mo:init} of \Cref{alg:mo} can be rearranged as
  \begin{equation}
    \label{eq:ba-bc}
    \bmat{v_1 & 0 \\ 0 & u_1} \bmat{\beta & 0 \\ 0 & \gamma} = \bmat{b & 0 \\ 0 & c}\quad \Longleftrightarrow \quad w_1 \Gamma= D.
  \end{equation}
  Identities~\eqref{eq:ba-A-B} and~\eqref{eq:ba-bc} characterize the block-Arnoldi process applied to \(K_0\) with initial block \(D\).
  We summarize the process as \Cref{alg:block_arnoldi} where all \(w_k \in \R^{(n + m) \times 2}\) and \(\Psi_{i,k} \in \R^{2 \times 2}\) are determined such that both $w_k\T w_k = I_2$ and the equations on lines~\ref{alg:block_arnoldi:init},~\ref{alg:block_arnoldi:psi} and~\ref{alg:block_arnoldi:recurrence} are verified.

  \begin{algorithm}[ht]
    \caption{%
      Block-Arnoldi Process
    }
    \label{alg:block_arnoldi}
    \begin{algorithmic}[1]
      \Require $K_0$, $D$
      \State\label{alg:block_arnoldi:init}%
      $w_1 \Gamma = D$
      \For{$k$ = 1,~2,~\(\ldots\)}
        \For{$i$ = 1,~\(\ldots\),~$k$}
          \State\label{alg:block_arnoldi:psi}%
          $\Psi_{i,k} = w_i\T K_0 w_k$
        \EndFor
        \State\label{alg:block_arnoldi:recurrence}%
        $w_{k+1} \Psi_{k+1,k} = K_0 w_k - \sum_{i=1}^k w_i \Psi_{i,k}$
      \EndFor
    \end{algorithmic}
  \end{algorithm}

  \subsection{Regularization of the block-Arnoldi process}

  Merging~\eqref{eq:mo-V}--\eqref{eq:mo-U} gives
  \begin{equation}
    \label{eq:mo-ba}
    \bmat{0 & A \\ B & 0}
    \bmat{V_k & 0 \\ 0 & U_k}
    =
    \bmat{V_{k+1} & 0 \\ 0 & U_{k+1}}
    \bmat{0 & H_{k+1,k} \\ F_{k+1,k} & 0},
  \end{equation}
  which is reminiscent of the relation one would obtain from applying an orthogonalization process to \(K_0\).
  Because \(K = K_0 + \blkdiag(\lambda I, \mu I)\),~\eqref{eq:mo-ba} yields
  \begin{align}
    \bmat{\lambda I & A \\ B & \mu I}
    \bmat{V_k & 0 \\ 0 & U_k}
    & =
    \left(
      \bmat{0 & A \\ B & 0}
      +
      \bmat{\lambda I & 0 \\ 0 & \mu I}
    \right)
    \bmat{V_k & 0 \\ 0 & U_k}
    \nonumber
    \\ & =
    \bmat{V_k & 0 \\ 0 & U_k}
    \bmat{\lambda I & H_k \\ F_k & \mu I}
    +
    \bmat{v_{k+1} & 0 \\ 0 & u_{k+1}}
    \bmat{0 & h_{k+1,k} e_k^T \\ f_{k+1,k} e_k^T & 0}
    \label{eq:mo-regularized}
  \end{align}

  The same reasoning applied to~\eqref{eq:ba-A-B} yields the following result, which parallels \citet[Theorem~\(2.1\)]{montoison-orban-2021}.

  \begin{theorem}
    \label{theorem:sparcity-block-arnoldi}
    Given the matrix \(K\) defined in~\eqref{eq:K} and the block right-hand side \(D\) defined in~\eqref{eq:def-K-K0-D}, the Krylov basis $W_k = \bmat{w_1 & \cdots & w_k}$ generated by \Cref{alg:block_arnoldi} with regularization has the form~\eqref{eq:form-Wk} where the vectors \(u_k\) and \(v_k\) are the same as those generated by \Cref{alg:mo} with initial vectors \(b\) and \(c\).
    In addition,
    \begin{equation}
      \label{eq:gsp-block-arnoldi}
      K
      W_k
      =
      W_{k+1} S_{k+1,k},
      \qquad
      S_{k+1,k} :=
      \begin{bmatrix}
        \Theta_{1,1} & \Psi_{1,2}   & \hdots & \Psi_{1,k}    \\
        \Psi_{2,1}   & \Theta_{2,2} & \ddots & \vdots       \\
                     & \ddots       & \ddots & \Psi_{k-1,k} \\
                     &              & \ddots & \Theta_{k,k} \\
                     &              &        & \Psi_{k+1,k}
      \end{bmatrix},
    \end{equation}
    where
    \[
      \Theta_{j,j} =
      \begin{bmatrix}
        \lambda & h_{j,j} \\
        f_{j,j} & \mu
      \end{bmatrix}
      \quad\! \text{and} \!\quad
      \Psi_{i,j} =
      \begin{bmatrix}
        0 & h_{i,j} \\
        f_{i,j} & 0
      \end{bmatrix}, \!\quad\!
      j = 1, \ldots, k, \!\quad\!
      i = 1, \ldots, j+1, \!\quad\!
      i \ne j.
    \]
    The scalars $h_{i,j}$, $f_{i,j}$ are those generated by \Cref{alg:mo} applied to $A$ and $B$ with initial vectors $b$ and $c$.
  \end{theorem}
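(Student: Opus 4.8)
The plan is to derive~\eqref{eq:gsp-block-arnoldi} directly from the already-established identity~\eqref{eq:ba-A-B} for the block-Arnoldi process applied to $K_0$, by adding the block-diagonal shift $\blkdiag(\lambda I, \mu I)$ and checking that the shift only affects the diagonal blocks of the block-Hessenberg matrix. First I would recall that \Cref{theorem:sparcity-block-arnoldi}'s first claim—that the regularized \Cref{alg:block_arnoldi} produces a basis of the form~\eqref{eq:form-Wk} with the \emph{same} $v_k$, $u_k$ as \Cref{alg:mo}—follows because, for $w_k$ of the block form~\eqref{eq:form-Wk}, the product $\blkdiag(\lambda I,\mu I)\,w_k = \blkdiag(\lambda v_k, \mu u_k)$ lies in $\Span\{v_k^\circ, u_k^\circ\}$, so the regularization term contributes nothing new to the recurrence that is not already spanned by $w_k$ itself; hence the orthogonalization in line~\ref{alg:block_arnoldi:recurrence} against $w_1,\dots,w_k$ generates exactly the same $w_{k+1}$ (equivalently, the same $v_{k+1}$ and $u_{k+1}$) as in the unregularized case, which by \Cref{theorem:sparcity-block-arnoldi}'s reference to \Cref{theorem:mo} and~\eqref{eq:ba-bc} are the vectors of \Cref{alg:mo} with initial vectors $b$, $c$. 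This also identifies $\Gamma = \blkdiag(\beta,\gamma)$ and the $h_{i,j}$, $f_{i,j}$ as those of \Cref{alg:mo}.

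Next, for the matrix identity, I would write $K W_k = (K_0 + \blkdiag(\lambda I, \mu I)) W_k = K_0 W_k + \blkdiag(\lambda I, \mu I) W_k$. The first term is $W_{k+1} G_{k+1,k}$ by~\eqref{eq:ba-A-B}. For the second term, using $W_k = \blkdiag(V_k, U_k) P_k$ from~\eqref{eq:form-Wk}, one has $\blkdiag(\lambda I, \mu I)\,\blkdiag(V_k, U_k) = \blkdiag(\lambda V_k, \mu U_k) = \blkdiag(V_k, U_k)\,\blkdiag(\lambda I_k, \mu I_k)$, and $P_k\T \blkdiag(\lambda I_k, \mu I_k) P_k = \blkdiag(\lambda, \mu, \lambda, \mu, \dots)$, i.e. the $k\times k$ block-diagonal matrix with each diagonal block equal to $\blkdiag(\lambda,\mu)$; call it $\Lambda_k$. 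Thus $\blkdiag(\lambda I, \mu I) W_k = W_k \Lambda_k$. Embedding $W_k$ into $W_{k+1}$ by padding $\Lambda_k$ with a zero block row, $W_k \Lambda_k = W_{k+1}\bmat{\Lambda_k \\ 0}$. Adding the two contributions, $K W_k = W_{k+1}\left(G_{k+1,k} + \bmat{\Lambda_k\\0}\right)$, and the matrix in parentheses is exactly $S_{k+1,k}$ as displayed, since $G_{k+1,k}$ has diagonal blocks $\Psi_{j,j} = \bmat{0 & h_{j,j} \\ f_{j,j} & 0}$ and adding $\blkdiag(\lambda,\mu)$ turns each into $\Theta_{j,j} = \bmat{\lambda & h_{j,j} \\ f_{j,j} & \mu}$, while all off-diagonal blocks $\Psi_{i,j}$ are unchanged.

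I expect the only genuinely delicate point to be the first claim—rigorously justifying that the regularized process generates the \emph{same} basis vectors as \Cref{alg:mo}. The cleanest way is an induction on $k$ mirroring the one already used in the proof of \Cref{theorem:mo}: assume $w_1,\dots,w_k$ have the block form~\eqref{eq:form-Wk} with $v_i$, $u_i$ matching \Cref{alg:mo}; then $\Psi_{i,k} = w_i\T K w_k = w_i\T K_0 w_k + w_i\T \blkdiag(\lambda I,\mu I) w_k$, where the second summand vanishes for $i < k$ by orthonormality of the $v$'s and $u$'s and equals $\blkdiag(\lambda,\mu)$ for $i=k$; and $w_{k+1}\Psi_{k+1,k} = K w_k - \sum_{i=1}^k w_i \Psi_{i,k}$ collapses, after cancelling the shift contributions, to the block form of lines~7--8 of \Cref{alg:mo}, giving $w_{k+1} = \bmat{v_{k+1}^\circ & u_{k+1}^\circ}$ with the same $v_{k+1}$, $u_{k+1}$. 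Everything else is bookkeeping with the permutation $P_k$ and the block structure, which is routine once the pattern of~\eqref{eq:ba-A-B} is in hand.
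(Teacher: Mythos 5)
Your proposal is correct and follows essentially the same route as the paper's proof: decompose $K = K_0 + \blkdiag(\lambda I,\mu I)$, observe that $\blkdiag(\lambda I,\mu I)\,w_k = w_k\Lambda$ with $\Lambda = \blkdiag(\lambda,\mu)$ so the shift is absorbed entirely into the diagonal blocks ($\Theta_{j,j} = \Psi_{j,j}+\Lambda$), and invoke~\eqref{eq:ba-A-B} for the $K_0$ part. Your explicit induction showing that the regularized recurrence collapses to the unregularized one (and hence yields the same $v_k$, $u_k$ as \Cref{alg:mo}) merely spells out what the paper leaves implicit in its appeal to ``the equivalence with \Cref{alg:mo}.''
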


  \begin{proof}
    \Cref{alg:block_arnoldi} applied to $K_0$ generates sparse pairs $w_k$ as in~\eqref{eq:form-Wk} because of the equivalence with \Cref{alg:mo}.
    The term \(\blkdiag(\lambda I, \mu I)\) can be seen as a regularization term:
    \begin{equation}
      \label{eq:regularization}
      \begin{bmatrix}
        \lambda I & 0 \\
        0 & \mu I
      \end{bmatrix}
      w_k
      =
      w_k \Lambda
      \quad \text{with} \quad
      \Lambda :=
      \begin{bmatrix}
        \lambda & 0 \\
        0 & \mu
      \end{bmatrix}.
    \end{equation}
    The identities~\eqref{eq:ba-A-B} and~\eqref{eq:regularization} allow us to write
    \begin{equation}
      \label{eq:regularization_gsp}
      K
      W_k
      =
      W_{k+1}
      \begin{bmatrix}
        \Psi_{1,1} + \Lambda & \Psi_{1,2} & \hdots       & \Psi_{1,k}           \\
        \Psi_{2,1}           & \ddots     & \ddots       & \vdots               \\
                             & \ddots     & \ddots       & \Psi_{k-1,k}         \\
                             &            & \Psi_{k,k-1} & \Psi_{k,k} + \Lambda \\
                             &            &              & \Psi_{k+1,k}
      \end{bmatrix},
    \end{equation}
    which amounts to~\eqref{eq:gsp-block-arnoldi} because $\Theta_{k,k} = \Psi_{k,k} + \Lambda$.
  \end{proof}

  Note that~\eqref{eq:gsp-block-arnoldi} is identical to~\eqref{eq:mo-regularized} where the order of the \(w_k\) has been permuted according to \(P_k\).

  Because of \cref{theorem:sparcity-block-arnoldi}, the Krylov basis $W_k$ generated by \Cref{alg:block_arnoldi} must have the sparsity structure~\eqref{eq:form-Wk}, so that only \(u_k\) and \(v_k\) need be generated, and they may be generated directly from \Cref{alg:mo}.
  The key point is that generating orthonormal bases of $\mathcal{K}_k(K, d)$ and $\mathcal{K}_k(K, D)$ by the Arnoldi process and \Cref{alg:mo}, respectively, require exactly the same amount of storage and $\mathcal{K}_k(K, d) \subset \mathcal{K}_k(K, D)$.
  Thus, residual norms produced by \GMRES are certain to be at least as large as those generated by a minimum-residual method that seeks an approximate solution $x_k$ in $\mathcal{K}_k(K, D)$.
  Such a method is the subject of the next section.

  \section{Derivation of \GPMR}

  In this section, we develop the method \GPMR based upon \Cref{alg:mo} with regularization to solve~\eqref{eq:gsp_preconditioned} in which the \(k\)-th iterate has the form
  \begin{equation}
    \label{eq:xy}
    \begin{bmatrix} x_k \\ y_k \end{bmatrix} = W_k z_k,
  \end{equation}
  where $z_k \in \R^{2k}$.
  Thanks to~\eqref{eq:ba-bc} and~\eqref{eq:gsp-block-arnoldi}, the residual can be written
  \begin{align}
    r_k & =
    \begin{bmatrix}
      b \\ c
    \end{bmatrix}
    -
    \begin{bmatrix}
      \lambda I & A \\
      B & \mu I
    \end{bmatrix}
    \begin{bmatrix}
      x_k \\ y_k
    \end{bmatrix}
    \nonumber \\
    & =
     w_1 \bmat{\beta \\ \gamma} - W_{k+1} S_{k+1,k} z_k
    \nonumber \\
    & = W_{k+1} ( \beta e_1 + \gamma e_2 - S_{k+1,k} z_k ).
    \label{eq:residual_general}
  \end{align}
  Because $W_{k+1}$ has orthonormal columns, $\|r_k\|$ can be minimized by defining $z_k$ as the solution of the linear least-squares problem
  \begin{equation}
    \label{eq:sub-gpmr}
    \minimize{z_k \in \R^{2k}} \|S_{k+1,k} z_k - (\beta e_1 + \gamma e_2)\|.
  \end{equation}

  \subsection{Relation between \GPMR and \BLGMRES}

  The \(k\)-th \BLGMRES iterate is defined by the matrix linear least-squares problem
  \begin{equation}
    \label{eq:problem-blgmres}
    \minimize{} \left\|
    \bmat{b & 0 \\ 0 & c}
    -
    \bmat{\lambda I & A \\ B & \mu I}
    \bmat{x_k^b & x_k^c \\ y_k^b & y_k^c}
    \right\|_F
  \end{equation}
  where $(x_k^b,~y_k^b) = W_k z_k^b$ and $(x_k^c,~y_k^c) = W_k z_k^c$.
  Accordingly, the \(k\)-th \BLGMRES subproblem is
  \begin{equation}
  \label{eq:sub-blgmres}
  \minimize{z_k^b, z_k^c \in \R^{2k}} \left\|S_{k+1,k} \bmat{z_k^b & z_k^c} - \bmat{\beta e_1 & \gamma e_2}\right\|_F,
  \end{equation}
  so that $z_k^b$ and $z_k^c$ solve the subproblem associated with right-hand sides $\beta e_1$ and $\gamma e_2$.
  In exact arithmetic, the solutions of~\eqref{eq:sub-gpmr} and~\eqref{eq:sub-blgmres} are connected via $z_k = z_k^b + z_k^c$, and the \GPMR and \BLGMRES approximations are connected via $x_k = x_k^b + x_k^c$ and $y_k = y_k^b + y_k^c$.
  We now outline the main stages for solving~\eqref{eq:sub-gpmr}.

  \subsection{A QR factorization}

  The solution of~\eqref{eq:sub-gpmr} can be determined via the QR factorization
  \begin{equation}
    \label{eq:qr_Sk}
    S_{k+1, k} = Q_k \begin{bmatrix} R_k \\ 0 \end{bmatrix},
  \end{equation}
  which can be updated at each iteration, where $Q_k \in \R^{(2k+2) \times (2k+2)}$ is a product of Givens reflections, and $R_k \in \R^{(2k) \times (2k)}$ is upper triangular.
  At each iteration, four new reflections are necessary to update~\eqref{eq:qr_Sk}.
  We denote their product $Q_{2k-1, 2k+2}$ so that $Q_k\T = Q_{2k-1,2k+2} \ldots Q_{1,4}$.
  For $i = 1,\ldots,k$, the structure of $Q_{2i-1, 2i+2}$ is
  $$
  \kbordermatrix{
         & 1 & \hdots & 2i-2 & 2i-1 & 2i & 2i+1 & 2i+2 & 2i+3 & \hdots & 2k+2 \\
  1      & 1 &        &      &      &    &      &      &      &        &      \\
  \vdots &   & \ddots &      &      &    &      &      &      &        &      \\
  2i-2   &   &        & 1    &      &    &      &      &      &        &      \\
  2i-1   &   &        &      & \x   & \x & \x   & \x   &      &        &      \\
  2i     &   &        &      & \x   & \x & \x   & \x   &      &        &      \\
  2i+1   &   &        &      & \x   & \x & \x   & \x   &      &        &      \\
  2i+2   &   &        &      & \x   & \x & \x   & \x   &      &        &      \\
  2i+3   &   &        &      &      &    &      &      & 1    &        &      \\
  \vdots &   &        &      &      &    &      &      &      & \ddots &      \\
  2k+2   &   &        &      &      &    &      &      &      &        & 1    \\
  }
  $$
  where the diagonal block extracted from rows and columns $2i-1,\ldots,2i+2$ is the product of the following four Givens reflections
  $$
    \bmat{
      1 &         &                     &   \\
        & c_{4,i} & \phantom{-} s_{4,i} &   \\
        & s_{4,i} &          -  c_{4,i} &   \\
        &         &                     & 1
    }
    \!\!
    \bmat{
      1 &         &      &                     \\
        & c_{3,i} &      & \phantom{-} s_{3,i} \\
        &         & 1    &                     \\
        & s_{3,i} &      &          -  c_{3,i}
    }
    \!\!
    \bmat{
      c_{2,i} & \phantom{-} s_{2,i} &      &   \\
      s_{2,i} &          -  c_{2,i} &      &   \\
              &                     & 1    &   \\
              &                     &      & 1
    }
    \!\!
    \bmat{
      c_{1,i} &    &      & \phantom{-} s_{1,i} \\
              & 1  &      &                     \\
              &    & 1    &                     \\
      s_{1,i} &    &      &          -  c_{1,i}
  }.
  $$
  The result $(\aout_1, \aout_2, \aout_3, \aout_4)$ of a matrix-vector product between the above \(4\)\(\times\)\(4\) block and a vector $(\ainshort_1, \ainshort_2, \ainshort_3, \ainshort_4)$ can be obtained via \Cref{alg:ref}.

  \begin{algorithm}[ht]
    \caption{%
      Procedure ref
    }
    \label{alg:ref}
    \begin{algorithmic}[1]
      \Require $i$, $\ainshort_1$, $\ainshort_2$, $\ainshort_3$, $\ainshort_4$
      \State $t = c_{1,i} \ain_1 + s_{1,i} \ain_4$, $\aout_4 = s_{1,i} \ain_1 - c_{1,i} \ain_4$, $\aout_1 = t$ \Comment{first reflection}
      \State $t = c_{2,i} \aout_1 + s_{2,i} \ain_2$, $\aout_2 = s_{2,i} \aout_1 - c_{2,i} \ain_2$, $\aout_1 = t$ \Comment{second reflection}
      \State $t = c_{3,i} \aout_2 + s_{3,i} \aout_4$, $\aout_4 = s_{3,i} \aout_2 - c_{3,i} \aout_4$, $\aout_2 = t$ \Comment{third reflection}
      \State $t = c_{4,i} \aout_2 + s_{4,i} \ain_3$, $\aout_3 = s_{4,i} \aout_2 - c_{4,i} \ain_3$, $\aout_2 = t$ \Comment{fourth reflection}
    \end{algorithmic}
  \end{algorithm}

  At iteration $k$, \cref{alg:mo} generates two new columns, and to update the QR decomposition we need first to apply all previous reflections as follows
  \[
  Q_{k-1}\T
  \bmat{
    \Psi_{1,k}   \\
    \vdots       \\
    \Psi_{k-1,k} \\
    \Theta_{k,k} \\
    \Psi_{k+1,k}
  }
  =
  Q_{2k-5,2k-2} \ldots Q_{3,6}
  \left[
    \begin{array}{cc}
      r_{1,2k-1}       & r_{1,2k}       \\
      r_{2,2k-1}       & r_{2,2k}       \\
      \bar{r}_{3,2k-1} & \bar{r}_{3,2k} \\
      \bar{r}_{4,2k-1} & \bar{r}_{4,2k} \\
      \multicolumn{2}{c}{\Psi_{3,k}}    \\
      \multicolumn{2}{c}{\vdots}        \\
      \multicolumn{2}{c}{\Psi_{k+1,k}}  \\
    \end{array}
  \right]
  =
  \bmat{
    r_{1,2k-1}          & r_{1,2k}          \\
    \vdots              & \vdots            \\
    r_{2k-2,2k-1}       & r_{2k-2,2k}       \\
    \bar{r}_{2k-1,2k-1} & \bar{r}_{2k-1,2k} \\
    \bar{r}_{2k,2k-1}   & \bar{r}_{2k,2k}   \\
                        & h_{k+1,k}         \\
    f_{k+1,k}           &
  },
  \]
  and then compute and apply the four reflections that constitute $Q_{2k-1,2k+2}$ such that coefficients under the diagonal are zeroed out
  $$
  Q_{2k-1,2k+2}
  \bmat{
    r_{1,2k-1}          & r_{1,2k}          \\
    \vdots              & \vdots            \\
    r_{2k-2,2k-1}       & r_{2k-2,2k}       \\
    \bar{r}_{2k-1,2k-1} & \bar{r}_{2k-1,2k} \\
    \bar{r}_{2k,2k-1}   & \bar{r}_{2k,2k}   \\
                        & h_{k+1,k}         \\
    f_{k+1,k}           &
  }
  =
  \bmat{
    r_{1,2k-1}    & r_{1,2k}    \\
    \vdots        & \vdots      \\
    r_{2k-2,2k-1} & r_{2k-2,2k} \\
    r_{2k-1,2k-1} & r_{2k-1,2k} \\
    0             & r_{2k,2k}   \\
    0             & 0           \\
    0             & 0
  }.
  $$
  A procedure to compute the Givens sines and cosines, and finalize the QR factorization of $S_{k+1,k}$ is described as \Cref{alg:givens}.
  \begin{algorithm}[ht]
    \caption{%
      Procedure givens
    }
    \label{alg:givens}
    \begin{algorithmic}[1]
      \Require $k$, $\bar{r}_{2k-1,2k-1}$, $\bar{r}_{2k-1,2k}$, $\bar{r}_{2k,2k-1}$, $\bar{r}_{2k,2k}$, $h_{k+1,k}$, $f_{k+1,k}$
      \State $\barbar{r}_{2k-1,2k-1} = (\bar{r}_{2k-1,2k-1}^2 + f_{k+1,k}^2)^{\frac12}$ \Comment{annihilate $f_{k+1,k}$}
      \State $c_{1,k} = \bar{r}_{2k-1,2k-1} / \barbar{r}_{2k-1,2k-1}$,
             $s_{1,k} =           f_{k+1,k} / \barbar{r}_{2k-1,2k-1}$
      \State $\barbar{r}_{2k-1,2k} = c_{1,k} \bar{r}_{2k-1,2k}$
      \State $   \bar{r}_{2k+2,2k} = s_{1,k} \bar{r}_{2k-1,2k}$
      \State $r_{2k-1,2k-1} = (\barbar{r}_{2k-1,2k-1}^2 + \bar{r}_{2k,2k-1}^2)^{\frac12}$ \Comment{annihilate $\bar{r}_{2k,2k-1}$}
      \State $c_{2,k} = \barbar{r}_{2k-1,2k-1} / r_{2k-1,2k-1}$,
             $s_{2,k} =    \bar{r}_{2k  ,2k-1} / r_{2k-1,2k-1}$
      \State $        {r}_{2k-1,2k} = c_{2,k} \barbar{r}_{2k-1,2k} + s_{2,k} \bar{r}_{2k,2k}$
      \State $ \barbar{r}_{2k  ,2k} = s_{2,k} \barbar{r}_{2k-1,2k} - c_{2,k} \bar{r}_{2k,2k}$
      \State $\mathring{r}_{2k,2k} = (\barbar{r}_{2k,2k}^2 + \bar{r}_{2k+2, 2k}^2)^{\frac12}$ \Comment{annihilate $\bar{r}_{2k+2, 2k}$}
      \State $c_{3,k} = \barbar{r}_{2k  ,2k} / \mathring{r}_{2k,2k}$,
             $s_{3,k} =    \bar{r}_{2k+2,2k} / \mathring{r}_{2k,2k}$
      \State $r_{2k,2k} = (\mathring{r}_{2k,2k}^2 + h_{k+1,k}^2)^{\frac12}$ \Comment{annihilate $h_{k+1,k}$}
      \State $c_{4,k} = \mathring{r}_{2k,2k} / r_{2k,2k}$,
             $s_{4,k} =            h_{k+1,k} / r_{2k,2k}$
    \end{algorithmic}
  \end{algorithm}
  Note that the first parameter of \Cref{alg:ref} and \Cref{alg:givens} is used to define which Givens sines and cosines are read from or written to memory.

  \subsection{\GPMR iterate and residual norm computation}

  We have from~\eqref{eq:residual_general} and~\eqref{eq:qr_Sk}:
  \begin{equation}
    \label{eq:residual_gblmr}
    \|r_k\| =
    \left \|Q_k  \bmat{R_k \\ 0} z_k - (\beta e_1 + \gamma e_2)\right\| =
    \left\| \bmat{R_k \\ 0} z_k - \bar{t}_k \right\|,
  \end{equation}
  where \(\bar{t}_k := Q_k^T (\beta e_1 + \gamma e_2) = (t_k, \bar{\tau}_{2k+1}, \bar{\tau}_{2k+2})\), $t_k := (\tau_1, \ldots, \tau_{2k})$ represents the first \(2k\) components of \(\bar{t}_k\), and the recurrence starts with \(\bar{t}_0 := (\bar{\tau}_1, \bar{\tau}_2) = (\beta, \gamma)\).
  $\bar{t}_k$ can be easily determined from $\bar{t}_{k-1}$ because $\bar{t}_k = Q_{2k-1,2k+2} (\bar{t}_{k-1}, 0, 0)$.
  The solution of~\eqref{eq:sub-gpmr} is thus $z_k := (\zeta_1, \ldots, \zeta_{2k})$ found by solving $R_k z_k = t_k$ with backward substitution.

  The definitions of \(\bar{t}_k\) and \(z_k\) together with~\eqref{eq:residual_gblmr} yield
  \begin{equation}
    \label{eq:residual_norm_gblmr}
    \|r_k\| =
    \sqrt{\bar{\tau}_{2k+1}^2 + \bar{\tau}_{2k+2}^2}.
  \end{equation}

  As in \GMRES, we only compute $z_k$ when \(\|r_k\|\) is smaller than a user-provided threshold.
  Thanks to~\eqref{eq:xy}, the solution may be computed efficiently as
  \begin{subequations}
    \label{eq:sol_gblmr}
    \begin{align}
    x_k &= \textstyle\sum_{i=1}^k \zeta_{2i-1} v_i, \\
    y_k &= \textstyle\sum_{i=1}^k \zeta_{2i} u_i.
    \end{align}
  \end{subequations}
  We summarize the complete procedure as \Cref{alg:gpmr}.

  \begin{algorithm}[t]
    \caption{%
      \GPMR
    }
    \label{alg:gpmr}
    \begin{algorithmic}[1]
      \Require $A$, $B$, $b$, $c$, $\lambda$, $\mu$, $\epsilon > 0$, $k_{\max} > 0$
      \State $\beta v_1 = b$, $\gamma u_1 = c$ \Comment{$(\beta, \gamma) > 0$ so that $\|v_1\| = \|u_1\| = 1$}
      \State $\bar{\tau}_1 = \beta$, $\bar{\tau}_2 = \gamma$ \Comment{Initialize $\bar{t}_0$}
      \State $\|r_0\| = (\bar{\tau}_1^2 + \bar{\tau}_2^2)^{\frac12}$ \Comment{compute $\|r_0\|$}
      \State $k = 0$
      \While{\(\|r_k\| > \epsilon\) \textbf{and} \(k < k_{\max}\)}
        \State $k \leftarrow k + 1$
        \State $q = A u_k$ \Comment{Orthogonal Hessenberg reduction}
        \State $p = B v_k$
        \For{$i$ = 1,~\(\ldots\),~$k$}
          \State $h_{i,k} = v_i\T q$
          \State $f_{i,k} = u_i\T p$
          \State $q = q - h_{i,k} v_i$
          \State $p = p - f_{i,k} u_i$
        \EndFor
        \State $h_{k+1,k} v_{k+1} = q$ \Comment{$h_{k+1,k} > 0$ so that $\|v_{k+1}\| = 1$}
        \State $f_{k+1,k} u_{k+1} = p$ \Comment{$f_{k+1,k} > 0$ so that $\|u_{k+1}\| = 1$}
        \State $\bar{r}_{1,2k} = h_{1,k}$, $\bar{r}_{2,2k-1} = f_{1,k}$
        \State \textbf{if} $k \ne 1$ \textbf{then} $(\bar{r}_{1,2k-1}, \bar{r}_{2,2k}) = (0, 0)$ \textbf{else} $(\bar{r}_{1,2k-1}, \bar{r}_{2,2k}) = (\lambda, \mu)$
        \For{$i = 1, \ldots, k-1$} \Comment{Apply $Q_{2k-5,2k-2},\ldots,Q_{1,4}$}
          \State \textbf{if} $i \ne k-1$ \textbf{then} $(\rho, \delta) = (0, 0)$ \textbf{else} $(\rho, \delta) = (\lambda, \mu)$
            \State $r_{2i-1,2k-1}, r_{2i,2k-1}, \bar{r}_{2i+1,2k-1}, \bar{r}_{2i+2,2k-1} = \applyGivens(i, \bar{r}_{2i-1,2k-1}, \bar{r}_{2i,2k-1}, \rho, f_{i+1,k})$
            \State $r_{2i-1,2k}, r_{2i,2k}, \bar{r}_{2i+1,2k}, \bar{r}_{2i+2,2k} = \applyGivens(i, \bar{r}_{2i-1,2k}, \bar{r}_{2i,2k}, h_{i+1,k}, \delta)$
        \EndFor
        \State $r_{2k-1,2k-1}, r_{2k-1,2k}, r_{2k,2k} =$ \Comment{Compute and apply $Q_{2k-1,2k+2}$} \par
        \hspace{\algorithmicindent} $\computeGivens(k, \bar{r}_{2k-1,2k-1}, \bar{r}_{2k-1,2k}, \bar{r}_{2k,2k-1}, \bar{r}_{2k,2k}, h_{k+1,k}, f_{k+1,k})$
        \State $\tau_{2k-1}, \tau_{2k}, \bar{\tau}_{2k+1}, \bar{\tau}_{2k+2} = \applyGivens(k, \bar{\tau}_{2k-1}, \bar{\tau}_{2k}, 0, 0)$ \Comment{update $\bar{t}_k$}
        \State $\|r_k\| = (\bar{\tau}_{2k+1}^2 + \bar{\tau}_{2k+2}^2)^{\frac12}$ \Comment{compute $\|r_k\|$}
      \EndWhile
        \State $\zeta_{2k} = \tau_{2k} / r_{2k,2k}$ \Comment{compute $z_k$}
        \For{$i = 2k-1, \ldots, 1$}
          \State $\zeta_i = (\tau_i - \sum_{j=i+1}^{2k} r_{i,j} \zeta_{j}) / r_{i,i}$
        \EndFor
        \State $x_k = \sum_{i=1}^k \zeta_{2i-1} v_i$ \Comment{compute $x_k$}
        \State $y_k = \sum_{i=1}^k \zeta_{2i} u_i$ \Comment{compute $y_k$}
    \end{algorithmic}
  \end{algorithm}

  \subsection{Memory requirements}

  \Cref{tab:memory} summarizes the storage costs of $k$ iterations of \GPMR, \GMRES and \BLGMRES.

  \begin{table}[ht]
    \label{tab:memory}
    \caption{Memory requirements for $k$ iterations of \GPMR, \GMRES and \BLGMRES.}
    \footnotesize
    \begin{center}
      \setlength{\arrayrulewidth}{1pt}
      \begin{tabular}{rccccccc}
        \hline
        & $(x_k, y_k)$ & $(q, p)$ & $(V_k, U_k)$ & $t_k$ & $z_k$ & $Q_k$ & $R_k$      \\
        \hline
        \GPMR     & $\phantom{2(}m+n\phantom{)}$        & $\phantom{2(}m+n\phantom{)}$    & $\phantom{2}k(m+n)$     & $2k$  & $2k$  & $8k$  & $k(2k+1)\phantom{/2}$  \\
        \GMRES    & $\phantom{2(}m+n\phantom{)}$        & $\phantom{2(}m+n\phantom{)}$    & $\phantom{2}k(m+n)$     & $\phantom{2}k$   & $\phantom{2}k$   & $2k$  & $k(\phantom{2}k+1)/2$ \\
        \BLGMRES  & $2(m+n)$     & $2(m+n)$ & $2k(m+n)$    & $4k$  & $4k$  & $8k$  & $k(2k+1)\phantom{/2}$  \\
        \hline
      \end{tabular}
    \end{center}
  \end{table}

  Some \GPMR variables are paired in \Cref{tab:memory} to easily identify their \GMRES and \BLGMRES counterparts.
  Note that $t_k$ and $z_k$ can share the same storage because $R_k t_k = z_k$ can be solved in-place.

  \section{Implementation and numerical experiments}
  \label{sec:num-results}

  We implemented \Cref{alg:gpmr} in Julia \citep{bezanson-edelman-karpinski-shah-2017}, version \(1.6\), as part of our \texttt{Krylov.jl} collection of Krylov methods \citep{montoison-orban-krylov-2020}.
  Our implementation of \GPMR is applicable in any floating-point system supported by Julia, and runs on CPU and GPU.
  The GPU support can be particularly relevant for~\eqref{eq:ism} because, as a Krylov method, \GPMR only requires linear operators that model $A_{\mathcal{I} \Gamma}u$, $B_{\Gamma \mathcal{I}}v$, $u \mapsto M_{\mathcal{I}\mathcal{I}} \backslash u$ and $v \mapsto N_{\Gamma\Gamma} \backslash v$.
  For instance, $v \mapsto N_{\Gamma\Gamma} \backslash v$ can be the forward and backward substitutions with the factors of an LU decomposition of $N_{\Gamma\Gamma}$.
  The use of abstract linear operators allows us to store $A_{\mathcal{I} \Gamma}$ and $B_{\Gamma \mathcal{I}}$ as well as decompositions of the diagonal blocks of~\eqref{eq:ism} on distinct compute nodes and leverage parallel architectures, such as GPUs.
  When the matrices are unstructured, \citet{duff-scott-2005} propose a robust arrowhead reordering such that each diagonal block is nonsingular and recovers a system of the form~\eqref{eq:ism}.

  We evaluate the performance of \GPMR on systems generated from unsymmetric matrices in the SuiteSparse Matrix Collection \citep{davis-hu-2011}.
  We use METIS to form a \(2\)\(\times\)\(2\) block matrix and use the two diagonal blocks to build a right block-Jacobi preconditioner $P_r$ with $\lambda = \mu = 1$.
  We set $P_{\ell} = I$ so the residual norm of~\eqref{eq:gsp} is identical to that of~\eqref{eq:gsp_preconditioned}.
  The right-hand side $(b_{\star},c_{\star})$ is generated so the exact solution of~\eqref{eq:gsp} is the vector of ones.
  We compare \GPMR to our implementation of \GMRES without restart in terms of number of iterations.
  Each algorithm stops as soon as $\|r_k\| \leq \varepsilon_a + \|(b,c)\| \varepsilon_r$ with absolute tolerance $\varepsilon_a = 10^{-12}$ and relative tolerance $\varepsilon_r = 10^{-10}$.
  \Cref{tab:ufl} summarizes our results, which show an improvement in terms of number of iterations ranging from about 10\% up to 50\% in favor of \GPMR.
  \Cref{fig:ufl1} reports residual histories of \GPMR, \GMRES and \BLGMRES where the two approximate solutions are summed on problems \emph{scircuit}, \emph{sme3Dc}, \emph{PR02R} and \emph{sherman5}.

  \begin{table}[ht]
    \label{tab:ufl}
    \caption{Number of iterations of \GPMR and \GMRES on systems from the SuiteSparse Matrix Collection.}
    \footnotesize
    \begin{center}
      \setlength{\arrayrulewidth}{1pt}
      \begin{tabular}{rrrrrr}
        \hline
        name            & size    & nnz      & \GMRES & \GPMR & gain \\
        \hline
        sherman5        & 3312    & 20793    & 25     & 20    & 20\% \\
        powersim        & 15838   & 67562    & 141    & 101   & 28\% \\
        Ill\_Stokes     & 20896   & 191368   & 59     & 54    & 9\%  \\
        sme3Dc          & 42930   & 3148656  & 127    & 78    & 39\% \\
        rma10           & 46835   & 2374001  & 48     & 41    & 15\% \\
        ecl32           & 51993   & 380415   & 58     & 42    & 28\% \\
        venkat50        & 62424   & 1717792  & 48     & 35    & 27\% \\
        poisson3Db      & 85623   & 2374949  & 56     & 50    & 11\% \\
        ifiss\_mat      & 96307   & 3599932  & 42     & 33    & 21\% \\
        hcircuit        & 105676  & 513072   & 47     & 37    & 21\% \\
        PR02R           & 161070  & 8185136  & 97     & 68    & 30\% \\
        scircuit        & 170998  & 958936   & 48     & 24    & 50\% \\
        transient       & 178866  & 961790   & 567    & 470   & 17\% \\
        ohne2           & 181343  & 11063545 & 50     & 39    & 22\% \\
        thermomech\_dK  & 204316  & 2846228  & 128    & 84    & 34\% \\
        marine1         & 400320  & 6226538  & 84     & 60    & 29\% \\
        Freescale1      & 3428755 & 18920347 & 456    & 344   & 25\% \\
        \hline
      \end{tabular}
    \end{center}
  \end{table}

  \begin{figure}[ht]
    \centering
    \includetikzgraphics[width=0.49\textwidth]{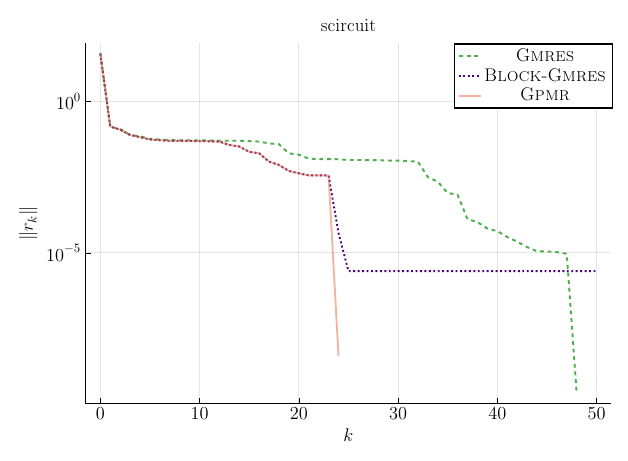}
    \hfill
    \includetikzgraphics[width=0.49\textwidth]{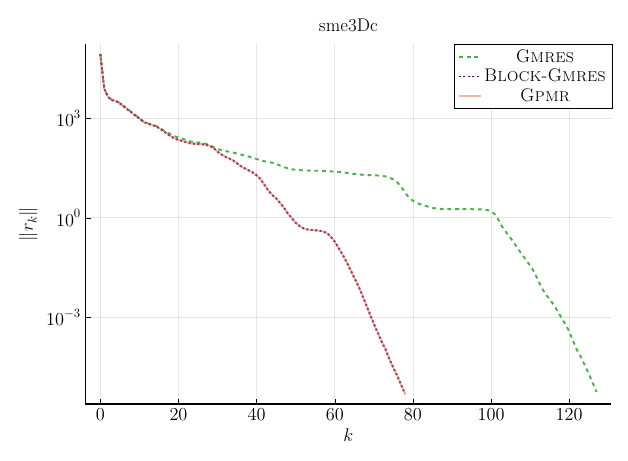}
    \\
    \includetikzgraphics[width=0.49\textwidth]{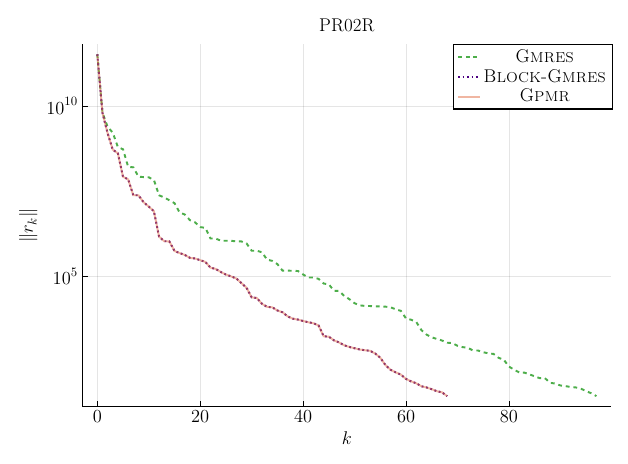}
    \hfill
    \includetikzgraphics[width=0.49\textwidth]{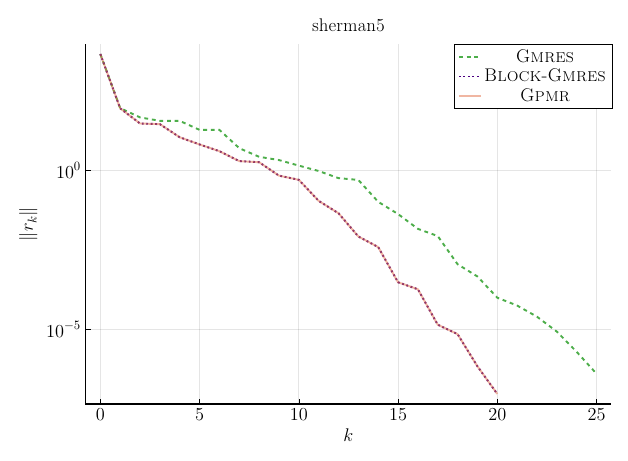}
    \label{fig:ufl1}
    \vspace{-12pt}
    \caption{Residual history of \GPMR, \GMRES and \BLGMRES.}
  \end{figure}

  The \GPMR and \BLGMRES residuals are nearly superposed except for \emph{scircuit}, on which \BLGMRES stagnates.
  The same phenomenon occurs on a generalized saddle point build using matrices \emph{well1033} as $A$ and \emph{illc1033} as $B$, $M = I$, $N = 0$, $\lambda = 1$ and $\mu = 0$.
  \Cref{fig:1033} reports residual histories of \GPMR, \GMRES and \BLGMRES on the generalized saddle point system in double and quadruple precision.
  Although theoretically equivalent, \GPMR appears to be less sensitive to arithmetic errors due to loss of orthogonality than its counterpart implementation based on \BLGMRES.
  Indeed, the number of \GPMR and \GMRES iterations is the same in double and quadruple precision.

  \begin{figure}[ht]
    \centering
    \includetikzgraphics[width=0.49\textwidth]{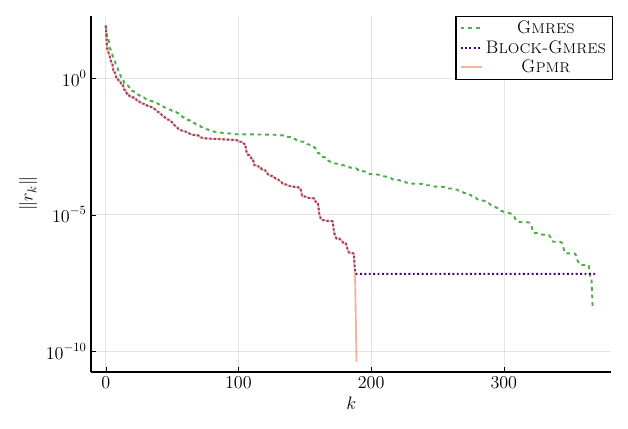}
    \hfill
    \includetikzgraphics[width=0.49\textwidth]{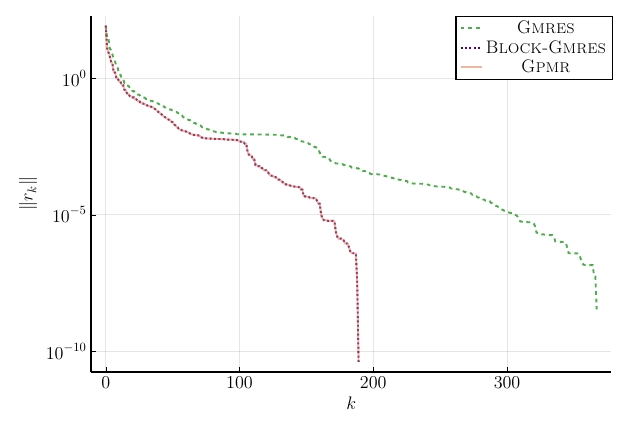}
    \label{fig:1033}
    \vspace{-12pt}
    \caption{Residual history of \GPMR, \GMRES and \BLGMRES on the generalized saddle point system in double (left) and quadruple precision (right).}
  \end{figure}

  When $K$, defined in~\eqref{eq:K}, is symmetric, \Cref{alg:mo} coincides with the orthogonal tridiagonalization process of \citet{saunders-simon-yip-1988} because $A\T=B$ and \GPMR is theoretically equivalent to \TriMR.
  We verify numerically the equivalence between the two methods on symmetric quasi-definite systems, with matrices $A$ from the SuiteSparse Matrix Collection, $M = N = I$, $\lambda = 1$ and $\mu = -1$.
  Each algorithm stops with the same tolerance as above.
  Because \GPMR can be viewed as \TriMR with full reorthogonalization, we use different floating-point systems to observe any loss of orthogonality in the Krylov basis.
  \Cref{fig:gpmr_trimr} reports residual histories of \GPMR in double precision and \TriMR in double, quadruple and octuple precision.
  The plots suggest that reorthogonalization is a more powerful device than extended precision.

  \begin{figure}[ht]
    \centering
    \includetikzgraphics[width=0.49\textwidth]{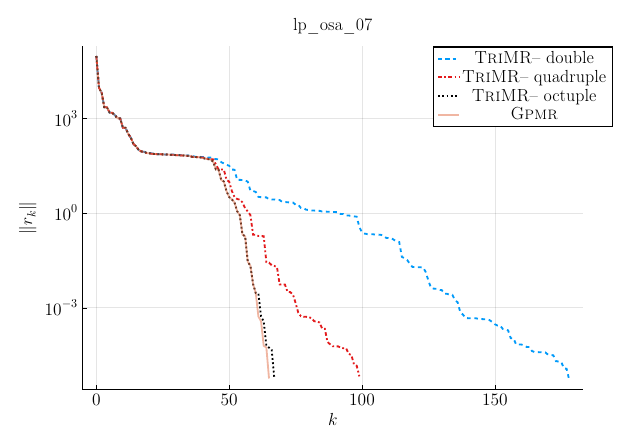}
    \hfill
    \includetikzgraphics[width=0.49\textwidth]{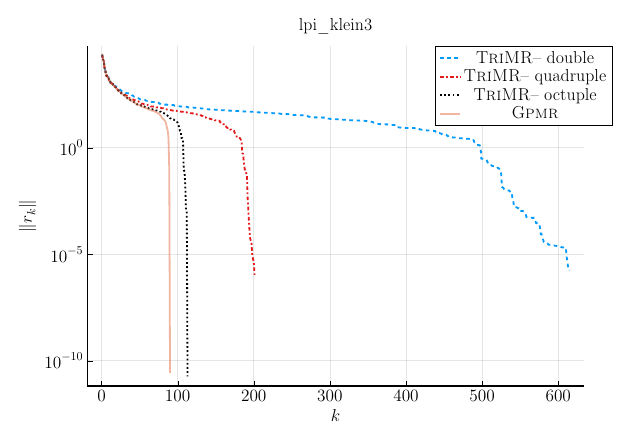}
    \label{fig:gpmr_trimr}
    \vspace{-12pt}
    \caption{Residual history of \GPMR and \TriMR.}
  \end{figure}

  \section{Discussion and extensions}

  Based upon \Cref{alg:mo}, it is possible to develop another method, \GPCG, in the spirit of \FOM \citep{saad-1981}.
  The \(k\)-th \GPCG iterate is defined by the Galerkin condition $W_k\T r_k = 0$.
  Its associated subproblem selects $z_k$ in~\eqref{eq:xy} as the solution of the square system
  \begin{equation}
    \label{eq:sub-gblcg}
    S_k z_k = \beta e_1 + \gamma e_2,
  \end{equation}
  where $S_k$ denotes the leading \((2k)\)\(\times\)\((2k)\) submatrix of $S_{k+1,k}$ in~\eqref{eq:gsp-block-arnoldi}.
  However, \GPCG may break down if $S_k$ is singular, and in that respect shares the disadvantages of \FOM, whereas the \GPMR iterates are always well defined.
  \GPCG could still be relevant for unsymmetric structured and positive-definite linear systems, such as those arising from the finite-element discretization of advection-diffusion equations \citep{xuemin-li-2009}, where $S_k$ is guaranteed to be nonsingular.
  Indeed, if $K$ is positive definite, its projection $S_k = W_k\T K W_k$ into the \(k\)-th Krylov subspace is also positive definite, which ensures that~\eqref{eq:sub-gblcg} has a unique solution.
  The same observation holds for \FOM and \BiCG \citep{fletcher-1976}, which should be restricted to certain classes of linear systems to avoid breakdowns.

  Although the focus of \GPMR is on unsymmetric linear systems, \Cref{fig:gpmr_trimr} shows that it is also relevant for ill-conditioned symmetric linear systems.
  Moreover, \GPMR allows to solve symmetric partitioned systems with symmetric indefinite blocks $M$ and $N$, whereas \TriMR requires them to be zero or definite matrices.

  A variant with restart in the spirit of \GMRESk is easily implemented on top of \GPMR.
  A limited-memory variant of \GPMR can be also developed and compared to \DQGMRES\citep{saad-wu-1996}.
  We leave the investigation of such extension to future work.

  \small
  \bibliographystyle{abbrvnat}
  \bibliography{abbrv,gspmr}
  \normalsize


\end{document}